\newcommand{\R}{\mathbb{R}}
\newcommand{\Div}{\mathrm{div} \, }
\newcommand{\dx}{\, {\rm d} x}
\newcommand{\dz}{\, {\rm d} z}
\newcommand{\dt}{\, {\rm d} t}
\newcommand{\ds}{\, {\rm d} s}
\newcommand{\dtau}{\, {\rm d} \tau}
\newcommand{\eps}{\varepsilon}
\newcommand{\loc}{{\rm loc}}
\newcommand{\Jac}{{\rm Jac}}
\newtheorem{lemma}{Lemma}
\newtheorem{thm}[lemma]{Theorem}
\theoremstyle{definition}
\DeclareMathOperator*{\diam}{diam}
\begin{document}
\title[On the linear independence condition]{On the linear independence condition \\ for the Bobkov-Tanaka first eigenvalue \\ of the double-phase operator}
\author[N. Biswas]{Nirjan Biswas}
\address[N. Biswas]{Department of Mathematics, Indian Institute of Science Education and Research Pune, Dr. Homi Bhabha Road, Pune 411008, India}
\email{nirjan.biswas@acads.iiserpune.ac.in}
\author[L. Gambera]{Laura Gambera}
\address[L. Gambera]{Dipartimento di Matematica e Informatica, Universit\`a degli Studi di Catania, Viale A. Doria 6, 95125 Catania, Italy}
\email{laura.gambera@unipa.it}
\author[U. Guarnotta]{Umberto Guarnotta}
\address[U. Guarnotta]{Dipartimento di Ingegneria Industriale e Scienze Matematiche, Università Politecnica delle Marche, Via Brecce Bianche 12, 60131 Ancona, Italy}
\email{u.guarnotta@univpm.it}

\begin{abstract}
The paper investigates a pivotal condition for the Bobkov-Tanaka type spectrum for double-phase operators. This condition is satisfied if either the weight $w$ driving the double-phase operator is strictly positive in the whole domain or the domain is convex and fulfils a suitable symmetry condition.
\end{abstract}

\maketitle

\let\thefootnote\relax
\footnote{{\bf{MSC 2020}}: 35J60, 35P30, 35B44, 35B40.}
\footnote{{\bf{Keywords}}: double-phase operator, non-homogeneous spectrum, blow-up arguments.}
\footnote{\Letter \quad Corresponding author: Umberto Guarnotta (u.guarnotta@univpm.it).}

\vspace{-1cm}

\section{Introduction}

In \cite{GG}, the authors investigated the existence of positive solutions to
\begin{equation}
\label{prob}
\left\{
\begin{alignedat}{2}
-\Delta_{p}^w u -\Delta_{q}u&= \alpha  w(x)|u|^{p-2}u+ \beta |u|^{q-2}u &&\quad \mbox{in}\;\; \Omega, \\
u &=0 &&\quad \mbox{on}\;\; \partial\Omega, 
\end{alignedat}
\right.
\end{equation}
where $\Omega\subseteq \R^N$, $N\geq 2$, is a bounded domain of class $C^2$, $1<q<p<N$ satisfies $p<q^*$ ($q^*:=\frac{Nq}{N-q}$ is the critical exponent), $\alpha,\beta\in\R$, and $\Delta_p^w+\Delta_q$ is the double-phase operator driven by the positive weight $w\in C^{0,1}(\overline{\Omega})$ belonging to the $p$-Muckenhoupt class $A_p$ (for details, see the end of this section).

Different notions of spectrum for non-homogeneous operators have been introduced: for a brief account, we address the reader to \cite{GG}. The solutions to \eqref{prob} are the eigenfunctions of the Bobkov-Tanaka type spectrum for the double phase operator, where the couple $(\alpha,\beta)$ represents the eigenvalue corresponding to $u$. This spectrum, introduced ten years ago for the $(p,q)$-Laplacian \cite{BT,BT2,BT3}, has been recently extended in the non-local case \cite{BS} and for the double-phase operator \cite{GG}.

Let $w\in C^{0,1}(\overline{\Omega})$ be a non-negative function. We consider the weighted $p$-Laplacian eigenvalue problem
\begin{equation*}
\left\{
\begin{alignedat}{2}
-\Delta_{p}^w v &=  \lambda w(x)|v|^{p-2}v &&\quad \mbox{in}\;\; \Omega, \\
v &=0 &&\quad \mbox{on}\;\; \partial\Omega. 
\end{alignedat}
\right.
\end{equation*}
Under reasonable conditions on $w$ (for instance $w\in A_p$; see \cite{PPR}), there exists a smallest eigenvalue $\lambda_p^w$, which is positive, simple, and isolated, with corresponding ($L^\infty$-normalized) positive eigenfunction $\phi_p^w\in L^\infty(\Omega)$. Moreover, we denote with $(\lambda_q,\phi_q)$ the first ($L^\infty$-normalized) eigenpair (see \cite{Le}) of the $q$-Laplacian eigenvalue problem
\begin{equation*}
\left\{
\begin{alignedat}{2}
-\Delta_{q} z &=  \mu |z|^{q-2}z &&\quad \mbox{in}\;\; \Omega, \\
z &=0 &&\quad \mbox{on}\;\; \partial\Omega.
\end{alignedat}
\right.
\end{equation*}
The existence of solutions to \eqref{prob} is strongly influenced by the so-called `linear independence condition', namely,
\begin{equation}
\label{LI}
\tag{LI}
\phi_p^{w}\neq \phi_q.
\end{equation}
In this paper we prove that \eqref{LI} holds for any domain when the weight $w\in A_p$ is strictly positive, and the strict-positivity hypothesis can be removed when working in convex domains having a particular symmetry. Our results pertain to regular weights (i.e., of class $C^{0,1}$ or $C^1$).

 The double-phase operator is widely applied in various real-life contexts, including fluid dynamics, biology, materials science, and game theory. For example, the double-phase operator models the flow behaviour of non-Newtonian fluids, capturing phenomena such as viscosity changes due to shear thinning or thickening (see \cite{SS, CM, BCM}). Moreover, it can also be used to model materials with heterogeneous properties: indeed, the coefficient $w(x) \ge 0$ dictates the geometry of the composite made of two different materials, one with a hardening exponent $p$ and the other with a hardening exponent $q$ (vide \cite{RZ} and the references therein). Finally, double-phase operators can be employed to enhance image decomposition and denoising \cite{BCZ}.

We recall that a function $w$ belongs to the $p$-Muckenhoupt class $A_p$ if $w,w^{-1}\in L^1_\loc(\Omega)$ and
\begin{equation}
\label{Muck}
\frac{1}{|B|} \int_B w \dx \leq C \left(\frac{1}{|B|} \int_B w^{\frac{1}{1-p}} \dx \right)^{1-p}
\end{equation}
for some $C>0$ and any ball $B\subseteq \Omega$; see \cite[p.297]{HKM}.
%The smallest constant $C$ fulfilling \eqref{Muck} will be indicated as $[w]_{A_p}$.
The double-phase operator is defined as
$$ \Delta_p^w u + \Delta_q u := \Div(w|\nabla u|^{p-2}\nabla u + |\nabla u|^{q-2}\nabla u) \quad \mbox{for all} \;\; u\in W^{1,\Theta}(\Omega), $$
being $W^{1,\Theta}(\Omega)$ the Musielak-Orlicz space (see \cite{HH}) associated with the $N$-function
$$ \Theta(x,t) := w(x)|t|^p+|t|^q \quad \mbox{for all} \;\; (x,t)\in\Omega\times\R.$$

Given $f,g:\Omega\to\R$ and $x_0\in\Omega$, we write $f\sim g$ as $x\to x_0$ if there exist $c_1,c_2>0$ such that
$$ c_1 \leq \liminf_{x\to x_0} \frac{f(x)}{g(x)} \leq \limsup_{x\to x_0} \frac{f(x)}{g(x)} \leq c_2. $$

%In order to perform blow-up arguments, we introduce the following notation. 
Moreover, for every $\lambda>0$, $A\subseteq\R^N$, and $u:A\to\R$ we define
$$ A_\lambda := \lambda^{-1}A = \{\lambda^{-1}x: \, x\in A\}, \; u_\lambda(y):= u(\lambda y) \quad \mbox{for all} \;\; y\in A_\lambda. $$

Finally, following \cite{GM}, we say that $\Omega$ is symmetric if there exist $N-1$ two-dimensional planes $T_k$ and $N-1$ rotations of $\theta_k\neq 0,\pi$ radians, with $k=1,\ldots,N-1$, such that $\Omega$ is invariant under the rotation $\theta_k$ in the plane $T_k$ for all $k=1,\ldots,N-1$.

\section{Main results}

Hereafter, we fix any $p,q>0$ such that $1<q<p<N$ with $p<q^*$.

\begin{thm}
\label{mainthm1}
Let $w\in C^{0,1}(\overline{\Omega})\cap A_p$ be strictly positive in $\Omega$. Then \eqref{LI} holds true.
\end{thm}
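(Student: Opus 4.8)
The plan is to argue by contradiction: suppose $\phi_p^w = \phi_q =: u$. Then $u$ is simultaneously the first eigenfunction of the weighted $p$-Laplacian (with weight $w$, eigenvalue $\lambda_p^w$) and of the $q$-Laplacian (eigenvalue $\lambda_q$). Both eigenfunctions are positive in $\Omega$ and, by standard regularity for the $p$- and $q$-Laplacian with Lipschitz weight (and since $w$ is strictly positive, hence bounded away from zero on compact subsets), $u \in C^{1,\gamma}_{\loc}(\Omega) \cap C(\overline\Omega)$. Subtracting the two eigenvalue equations, $u$ must satisfy
\[
-\Div\!\left(w|\nabla u|^{p-2}\nabla u\right) = \lambda_p^w\, w\, u^{p-1}, \qquad -\Delta_q u = \lambda_q\, u^{q-1} \quad \text{in } \Omega,
\]
and I want to derive a pointwise incompatibility. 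The natural device is the boundary behaviour near $\partial\Omega$: since $\Omega$ is $C^2$, Hopf-type boundary estimates give $u \sim \delta$ as $x \to x_0 \in \partial\Omega$, where $\delta(x) := \operatorname{dist}(x,\partial\Omega)$, for solutions of each equation — but the constants and, more importantly, the fine asymptotic profile depend on the homogeneity exponent ($p$ vs. $q$). A blow-up argument at a boundary point should expose this.

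Concretely, fix $x_0 \in \partial\Omega$, flatten the boundary, and rescale: set $u_\lambda(y) = u(\lambda y)$ on $\Omega_\lambda = \lambda^{-1}\Omega$ and renormalize, say $v_\lambda := u_\lambda / \|u_\lambda\|_{L^\infty(B_1 \cap \Omega_\lambda)}$ (or divide by $\lambda$, tracking the scaling of each term). The key point: under the scaling $x = \lambda y$, the right-hand sides $\lambda_p^w w u^{p-1}$ and $\lambda_q u^{q-1}$ pick up different powers of $\lambda$ relative to the principal parts, so in the blow-up limit $\lambda \to 0^+$ only the dominant operator survives with a nontrivial equation while the other's source term vanishes (or blows up) — forcing the limit profile $v_0$ on a half-space to be simultaneously $p$-harmonic and $q$-harmonic with zero Dirichlet data on the hyperplane, except near the boundary where the asymptotics $u \sim \delta^{?}$ clash. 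Since $w \in C^{0,1}$ and $w(x_0) > 0$, in the blow-up $w(\lambda y) \to w(x_0) > 0$ is frozen to a positive constant, so the weighted $p$-Laplacian limit is a genuine (constant-coefficient) $p$-Laplacian — this is exactly where strict positivity is used, and it is what lets us treat the $w$-term on equal footing with the $q$-term. The contradiction comes from the uniqueness of the one-dimensional boundary-layer profile: for $-\Delta_p u = c u^{p-1}$ near a flat boundary the solution behaves like $\delta$ to leading order but with a correction governed by $p$, and matching both equations simultaneously over-determines $u$ unless $\lambda_p^w w(x_0) u^{p-1} \equiv \lambda_q u^{q-1}$ identically near $\partial\Omega$, which is impossible for $p \ne q$ since $u \to 0$ there (the ratio $u^{p-q} \to 0$, so the $q$-term dominates, leaving $-\Delta_q u = \lambda_q u^{q-1}$ and $-\Delta_p^w u = o(u^{q-1})$, incompatible with $u \sim \delta$ and $\nabla u \ne 0$ at the boundary by Hopf).

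A cleaner route, which I would try first and fall back to blow-up only if needed: test the $p$-equation against $\phi_q^{?}$ and the $q$-equation against $\phi_p^{w,?}$ using the Picone/hidden-convexity inequality for the $p$- and $q$-Laplacian. If $\phi_p^w = \phi_q = u$, plugging $u$ into both Picone identities yields, after integration by parts, relations forcing $\nabla(\text{something}) = 0$; combined with the strict sign of $u$ this would collapse to a contradiction with $p \ne q$. The main obstacle in either approach is handling the degeneracy of the two operators simultaneously at points where $\nabla u = 0$ (interior critical points of $u$, which do exist since $u$ vanishes on $\partial\Omega$ and is positive inside): the $p$- and $q$-Laplacian degenerate on different sets relative to the size of $|\nabla u|$, and controlling the error terms there — or, in the blow-up approach, ensuring the rescaled solutions converge in $C^1_{\loc}$ uniformly up to the boundary — is the technical heart of the argument. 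Given the paper's stated emphasis on blow-up arguments, I expect the actual proof to run the boundary blow-up, using the $C^2$ regularity of $\partial\Omega$ and the strict positivity of $w$ to freeze the weight, and to extract the contradiction from the mismatch in boundary decay rates dictated by $p$ versus $q$.
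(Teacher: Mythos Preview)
Your boundary blow-up is aimed at the wrong point, and this is a genuine gap rather than a stylistic difference. Near $\partial\Omega$ the Hopf lemma gives $u\sim\delta$ and $|\nabla u|\to c>0$ for \emph{both} the $p$- and the $q$-eigenfunction, so a first-order blow-up at $x_0\in\partial\Omega$ sends the right-hand sides of both equations to zero (they scale like $\lambda^{p}$ and $\lambda^{q}$ respectively) and the limit profile is a linear function $c\,y_N$ on a half-space, which is simultaneously $p$- and $q$-harmonic. There is no ``mismatch in boundary decay rates'': the decay rate is linear in both cases, and the over-determination you allude to would require a second-order expansion that you have not set up. A further issue is that the hypothesis is $w>0$ \emph{in} $\Omega$, not on $\overline{\Omega}$; your sentence ``since $w(x_0)>0$'' for $x_0\in\partial\Omega$ is not justified, so the weight need not freeze to a positive constant there.

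The paper instead blows up at an \emph{interior global maximizer} $x_0$ of $\phi$ (where, crucially, $w(x_0)>0$ is guaranteed). At such a point $\nabla\phi(x_0)=0$ and the asymptotics of an $r$-Laplace eigenfunction are $\phi(x_0)-\phi(x)\sim|x-x_0|^{r'}$, with $r'=r/(r-1)$; since $p'\neq q'$, the two equations dictate incompatible local profiles. Concretely, one sets $\hat{\phi}_\lambda(y)=\lambda^{-p'}(\phi(0)-\phi(\lambda y))$, checks that $\Delta_p^{w_\lambda}\hat{\phi}_\lambda=\lambda_p^w w_\lambda\phi_\lambda^{p-1}$, uses Harnack (exploiting $\hat{\phi}_\lambda\geq 0$, $\hat{\phi}_\lambda(0)=0$, and uniform ellipticity from $w\geq\gamma_H>0$ on a neighbourhood) plus Lieberman's $C^{1,\tau}$ estimates to get $C^1_{\loc}$ compactness, and passes to a limit $\overline{\phi}$ solving $\Delta_p\overline{\phi}=\lambda_p^w\phi(0)^{p-1}$ in $\R^N$. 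The companion $q$-equation (via the argument of \cite[Proposition~A.1]{BT2}) then forces $\overline{\phi}\equiv 0$, contradicting this limit PDE. The Picone route you mention as a fallback is not pursued in the paper and, as you note yourself, the critical-point degeneracy makes it unclear how to close it.
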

\begin{proof}
For the sake of brevity, set $\phi:=\phi_p^w$ and $\lambda_1:=\lambda_p^w$. Arguing by contradiction, we suppose $\phi=\phi_q$. Since $\phi_q\in C^{1,\tau}(\overline{\Omega})$, then also $\phi\in C^{1,\tau}(\overline{\Omega})$, and in particular $\phi$ admits a global maximizer in $\Omega$ by Weierstrass' theorem, together with $\phi>0$ in $\Omega$ and $\phi=0$ on $\partial\Omega$. Without loss of generality, assume that $0\in\Omega$ and $\phi(0)=\max_\Omega \phi$. Given any $\lambda>0$, we put
$$ \hat{\phi}_\lambda(y):=\frac{\phi(0)-\phi_\lambda(y)}{\lambda^{p'}} \quad \mbox{for all} \;\; y\in\Omega_\lambda. $$
It is readily seen that
\begin{equation}
\label{blowup}
\Delta_p^{w_\lambda} \hat{\phi}_\lambda = \lambda_1 w_\lambda \phi_\lambda^{p-1} \quad \mbox{in} \;\; \Omega_\lambda.
\end{equation}
We claim that $\{\hat{\phi}_\lambda: \, \lambda>0\}$ is bounded in $C^{1,\tau}_\loc(\R^N)$, that is, for any compact set $K\subseteq \R^N$ there exist $C_K,\Lambda_K>0$ such that $\|\hat{\phi}_\lambda\|_{C^{1,\tau}(K)} \leq C_K$ for all $\lambda\in(0,\Lambda_K)$. We divide the proof in the following steps.

\begin{enumerate}[label={\arabic*.}]
\item \underline{Localization}. Fix any compact $K\subseteq \R^N$ and $H\Subset \Omega$ open neighborhood of $x=0$. Choose $R_K,\Lambda_K>0$ such that $K\Subset B_{R_K} \Subset H_\lambda$ for all $\lambda\in(0,\Lambda_K)$ (notice that $H_\lambda \nearrow \R^N$ as $\lambda \to 0$). Further, since $H_{\lambda} \Subset \Omega_{\lambda}$ for each $\lambda>0$, we indeed have $B_{R_K} \Subset \Omega_{\lambda}$ for all $\lambda\in(0,\Lambda_K)$. Owing to $w\in C^0(\overline{\Omega})$ and $w>0$ in $\Omega$, there exist $\gamma_H,\Gamma_H>0$ such that $\gamma_H \leq w\leq \Gamma_H$ in $H$. We will apply the local arguments in steps 2-3 to the equation \eqref{blowup} restricted to $B_{R_K}$ so that our estimates will be independent of $\lambda$ (they will depend only on $R_K$ and $\Lambda_K$, that is, on $K$). 
\item \underline{Local $L^\infty$ estimate}. Preliminarily, notice that $\hat{\phi}_\lambda(0)=0$ and $\hat{\phi}_\lambda$ is non-negative, since $\phi(0)$ is the global maximum for $\phi$. Moreover, $\gamma_H \leq w_\lambda \leq \Gamma_H$ in $H_\lambda$, so the operator in \eqref{blowup} satisfies
\begin{equation*}
|w_\lambda(y) |\xi|^{p-2}\xi| \leq \Gamma_H |\xi|^{p-1} \quad \mbox{and} \quad \langle w_\lambda(y)|\xi|^{p-2}\xi,\xi\rangle \geq \gamma_H |\xi|^p \quad \mbox{for all} \;\; (y,\xi)\in H_\lambda \times \R^N.
\end{equation*}
Concerning the right-hand side of \eqref{blowup}, we observe that
\begin{equation*}
|\lambda_1 w_\lambda(y) \phi_\lambda(y)^{p-1}| \leq \lambda_1 \|w\|_{C^0(\overline{\Omega})} \|\phi\|^{p-1}_{C^0(\overline{\Omega})} \quad \mbox{for all} \;\; y\in H_\lambda.
\end{equation*}
Accordingly, the Harnack inequality \cite[Theorem 7.2.2]{PS} and $\inf_{B_R} \hat{\phi}_\lambda = 0$ yield
\begin{equation*}
\sup_K \hat{\phi}_\lambda \leq \sup_{B_{R_K}} \hat{\phi}_\lambda \leq C\left(\inf_{B_{R_K}} \hat{\phi}_\lambda + 1\right) = C \quad \mbox{for all} \;\; \lambda\in(0,\Lambda_K),
\end{equation*}
for a suitable $C>0$ depending on $p,N,\Omega,\|w\|_{C^0(\overline{\Omega})},\gamma_H,\Gamma_H,K$.
\item \underline{$C^{1,\alpha}$ local regularity}. Notice that
$$ \Jac(w_\lambda(y)|\zeta|^{p-2}\zeta) = w_\lambda(y)\left[(p-2)|\zeta|^{p-4}\zeta\otimes\zeta + |\zeta|^{p-2}I\right] \quad \mbox{for all} \;\; (y,\zeta)\in\Omega_\lambda\times\R^N, $$
where $\Jac$ is the Jacobian with respect to $\zeta$, $I$ is the identity matrix, while $\otimes$ denotes the tensor product. As a consequence, denoting with $\|\cdot\|$ any matrix norm in $\R^N$,
\begin{equation*}
\begin{aligned}
\langle \Jac(w_\lambda(y)|\zeta|^{p-2}\zeta)\xi,\xi\rangle &= w_\lambda(y)\left[(p-2)|\zeta|^{p-4}|\zeta \cdot \xi|^2 + |\zeta|^{p-2}|\xi|^2\right] \geq \gamma_H \min\{p-1,1\} |\zeta|^{p-2}|\xi|^2, \\
\|\Jac(w_\lambda(y)|\zeta|^{p-2}\zeta)\| &\leq w_\lambda(y)(p-1)|\zeta|^{p-2} \leq \Gamma_H (p-1)|\zeta|^{p-2},
\end{aligned}
\end{equation*}
for all $(y,\zeta,\xi)\in H_\lambda\times\R^N\times\R^N$. In addition, due to $w\in C^{0,1}(\overline{\Omega})$,
\begin{equation*}
|w(y_1)|\zeta|^{p-2}\zeta-w(y_2)|\zeta|^{p-2}\zeta| \leq [w]_{C^{0,1}(\overline{\Omega})}|y_1-y_2||\zeta|^{p-1} \quad \mbox{for all} \;\; y_1,y_2\in H_\lambda, \;\; \zeta\in\R^N.
\end{equation*}
Since $\|\hat{\phi}_\lambda\|_{L^\infty(K)} \leq C$ by step 2, \cite[Theorem 1.7]{L} entails
\begin{equation*}
\|\hat{\phi}_\lambda\|_{C^{1,\tau}(K)} \leq \hat{C} \quad \mbox{for all} \;\; \lambda \in (0,\Lambda_K),
\end{equation*}
for an opportune $\hat{C}>0$ depending on $p,N,\Omega,\|w\|_{C^{0,1}(\overline{\Omega})},\gamma_H,\Gamma_H,K$. Arbitrariness of $K\Subset\R^N$ implies that $\{\hat{\phi}_\lambda\}$ is bounded in $C^{1,\tau}_\loc(\R^N)$; thus, Ascoli-Arzelà's theorem and a diagonal argument ensure that $\hat{\phi}_\lambda \to \overline{\phi}$ in $C^1_\loc(\R^N)$ as $\lambda\to 0$, for a suitable $\overline{\phi}\in C^1_\loc(\R^N)$. In particular, $\overline{\phi}(0)=0$.
\end{enumerate}

Next, we claim that, if $f\in C^{0,\eta}(\overline{\Omega})$ for some $\eta\in(0,1]$, then $f_\lambda \to f(0)$ in $C^0_\loc(\R^N)$. Indeed, given $K\Subset \R^N$ and selecting any $R_K,\Lambda_K>0$ such that $K\Subset B_{R_K} \Subset \Omega_\lambda$ for all $\lambda \in (0,\Lambda_K)$, one has
\begin{equation*}
\sup_{y\in K} |f(\lambda y)-f(0)| \leq [f]_{C^{0,\eta}(\overline{\Omega})}|\lambda y|^{\eta} \leq [f]_{C^{0,\eta}(\overline{\Omega})}(\diam K)^{\eta} \lambda^\eta \to 0 \quad \mbox{as} \;\; \lambda\to 0,
\end{equation*}
proving the claim.

Since $w\in C^{0,1}(\overline{\Omega})$ and $\phi\in C^{1,\tau}(\overline{\Omega})$, the above claims ensure both $w_\lambda \to w(0)$ and $\phi_\lambda \to \phi(0)$ in $C^0_\loc(\R^N)$. Hence, passing to the limit in \eqref{blowup} via local uniform convergence, we get
\begin{equation*}
\Div(w(0)|\nabla\overline{\phi}|^{p-2}\nabla\overline{\phi}) = \lambda_1 w(0)\phi(0)^{p-1} \quad \mbox{in} \;\; \R^N,
\end{equation*}
namely,
\begin{equation}
\label{limiteq}
\Delta_p \overline{\phi} = \lambda_1 \phi(0)^{p-1} \quad \mbox{in} \;\; \R^N,
\end{equation}
which is (A.2) of \cite{BT2}. Then the proof follows verbatim \cite[Proposition A.1]{BT2}, leading to $\overline{\phi}=0$ in $\R^N$, in sharp contrast with \eqref{limiteq}.
\end{proof}

We observe that the proof of Theorem \ref{mainthm1} strongly relies on the positivity of $w$ on a local maximizer of $\phi_q$. This condition is unnecessary for special domains and sufficiently regular weights, as the following theorem shows.

\begin{thm}
\label{mainlemma}
Let $\Omega$ be convex and symmetric. Then \eqref{LI} is satisfied for any $w\in C^1(\overline{\Omega})\cap A_p$.
\end{thm}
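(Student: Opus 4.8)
The plan is to argue by contradiction, assuming $\phi:=\phi_p^w=\phi_q$ (so $\phi\in C^{1,\tau}(\overline{\Omega})$), and to reduce to a blow-up at the maximizer of $\phi_q$, in the spirit of the proof of Theorem~\ref{mainthm1}. Since $\Omega$ is convex and symmetric, $\phi_q$ is (up to sign) the unique first eigenfunction and hence invariant under the $N-1$ rotations defining the symmetry of $\Omega$; by \cite{GM} it then has a single critical point, which is the centre $x_0$ of the symmetry and the global maximizer of $\phi_q$, and after a translation I may assume $x_0=0$. If $w(0)>0$, continuity of $w$ gives $w$ bounded away from $0$ near $0$ and the proof of Theorem~\ref{mainthm1} applies verbatim — that proof only uses positivity of $w$ near the maximizer of $\phi_q$ — producing a contradiction. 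So the essential case is $w(0)=0$; here, since $w\in C^1(\overline{\Omega})$ and $0$ minimizes the nonnegative function $w$, one has $\nabla w(0)=0$, whence $w_\lambda(y):=w(\lambda y)=o(\lambda)$ locally uniformly as $\lambda\to0^+$. This is the only point at which the $C^1$ (rather than merely Lipschitz) regularity of $w$ is needed.

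Because the $w$-weighted $p$-Laplacian degenerates at $0$, the blow-up must be carried out with the scaling proper to the $q$-Laplacian. I set $\tilde{\phi}_\lambda(y):=\lambda^{-q'}(\phi(0)-\phi_\lambda(y))$ for $y\in\Omega_\lambda$, so that $\tilde{\phi}_\lambda\geq0$, $\tilde{\phi}_\lambda(0)=0$, and — by the computation leading to \eqref{blowup}, with the unweighted $q$-part in place of the weighted $p$-part — $-\Delta_q\tilde{\phi}_\lambda=-\lambda_q\phi_\lambda^{q-1}$ in $\Omega_\lambda$, with right-hand side bounded uniformly in $\lambda$. Applying the Harnack inequality \cite[Theorem 7.2.2]{PS} to this \emph{unweighted} equation, together with $\inf_{B_R}\tilde{\phi}_\lambda=0$, gives a uniform local $L^\infty$ bound, which \cite[Theorem 1.7]{L} upgrades to a uniform $C^{1,\tau}_\loc(\R^N)$ bound; hence $\tilde{\phi}_\lambda\to\tilde{\phi}$ in $C^1_\loc(\R^N)$ along a subsequence, with $\Delta_q\tilde{\phi}=\lambda_q\phi(0)^{q-1}$ in $\R^N$, $\tilde{\phi}\geq0$, $\tilde{\phi}(0)=0$; in particular $\tilde{\phi}\not\equiv0$. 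Since $0$ is the fixed centre, $\Omega_\lambda$ is itself symmetric and the symmetry passes to $\tilde{\phi}$; as the rotation angles are $\neq0,\pi$, this pins $\tilde{\phi}$ down to the radial profile $\tilde{\phi}(y)=c_0|y|^{q'}$ with $c_0>0$. Translating back, this yields the sharp local gradient bound $|\nabla\phi(x)|\leq C|x|^{q'-1}$ for $|x|$ small.

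With this bound the contradiction comes from the weighted equation alone. Integrating $-\Div(w|\nabla\phi|^{p-2}\nabla\phi)=\lambda_p^w\,w\,\phi^{p-1}$ over a small ball $B_r$ (licit in the weak sense, since $w\in C^0(\overline{\Omega})$ makes the flux $w|\nabla\phi|^{p-2}\nabla\phi$ continuous) and writing $g(r):=\int_{\partial B_r}w\,{\rm d}\sigma$, $G(r):=\int_{B_r}w\dx$, one gets, using $\phi\geq\phi(0)/2$ near $0$ and $|\nabla\phi|\leq C|x|^{q'-1}$,
$$
c\,G(r)\ \leq\ \lambda_p^w\int_{B_r}w\,\phi^{p-1}\dx\ =\ -\int_{\partial B_r}w\,|\nabla\phi|^{p-2}\Big\langle\nabla\phi,\tfrac{x}{|x|}\Big\rangle\,{\rm d}\sigma\ \leq\ C\,r^{\beta}\,g(r),\qquad \beta:=(q'-1)(p-1).
$$
Since $q<p$ is equivalent to $q'>p'$, we have $\beta>1$. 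As $G'=g$ and $G>0$ on $(0,r_0)$ (because $w^{-1}\in L^1_\loc$), integration of $G'/G\geq K r^{-\beta}$ gives $G(r)\leq A\exp\!\big(-K r^{1-\beta}/(\beta-1)\big)$, which tends to $0$ faster than every power of $r$ as $r\to0^+$. On the other hand $w$ is Lipschitz with $w(0)=0$, so $\sup_{B_r}w$ is controlled by a fixed power of $G(2r)$ and decays super-polynomially as well; hence $\int_{B_r}w^{\frac{1}{1-p}}\dx\geq|B_r|\,(\sup_{B_r}w)^{\frac{1}{1-p}}\to+\infty$ as $r\to0^+$, contradicting the local integrability of $w^{\frac{1}{1-p}}$ built into the Muckenhoupt condition \eqref{Muck}. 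This proves \eqref{LI}.

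The main obstacle is the case $w(0)=0$ and, inside it, the identification of the blow-up profile: the limiting equation $\Delta_q\tilde{\phi}=\lambda_q\phi(0)^{q-1}$ in $\R^N$ has non-radial entire solutions in general, so the sharp behaviour $|\nabla\phi(x)|\sim|x|^{q'-1}$ — which is exactly what makes the flux estimate close with the critical exponent $\beta=(q'-1)(p-1)$, its strict excess over $1$ encoding $q<p$ — is not available on an arbitrary domain. Both structural assumptions enter precisely here: convexity places the maximizer of $\phi_q$ at the centre of symmetry and keeps its superlevel sets convex, while the rotational symmetry (angles $\neq0,\pi$) forces the blow-up limit to be radial, hence nondegenerate. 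The Muckenhoupt condition and the $C^1$-regularity of $w$ intervene only at the very end, converting ``$G$ decays too fast'' into a genuine contradiction.
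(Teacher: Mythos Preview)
Your route is genuinely different from the paper's. The paper sets $\sigma=w|\nabla\phi|^{p-q}$, derives the first-order transport PDE $E\cdot\nabla\sigma+g\sigma=0$ and integrates it along the gradient-flow characteristics of $\phi$ to obtain the pointwise estimate $\sigma(x)\lesssim\exp(-M|x|^{-(p-q)/(q-1)})$; this needs $\phi\in C^2$ away from the critical point (supplied by \cite{GM} under convexity and symmetry) and $w\in C^1$, so that $\sigma\in C^1$ and the characteristics make sense. You instead blow up the unweighted $q$-equation to get $|\nabla\phi(x)|\le C|x|^{q'-1}$, then integrate the weighted $p$-equation over balls to derive $G'/G\ge Kr^{-\beta}$ for $G(r)=\int_{B_r}w$, whence $G(r)\lesssim\exp(-cr^{-(\beta-1)})$; since $\beta-1=\tfrac{p-q}{q-1}$ this is exactly the paper's decay exponent, reached by a more elementary integral argument.

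There is, however, a real gap in the step ``the rotation angles $\neq 0,\pi$ pin $\tilde\phi$ down to the radial profile $c_0|y|^{q'}$''. Invariance under $N-1$ rotations by fixed angles $\theta_k\neq 0,\pi$ does \emph{not} force radiality: a single rotation by $2\pi/3$ yields only threefold symmetry, and the entire equation $\Delta_q\tilde\phi=\lambda_q\phi(0)^{q-1}$ admits non-radial non-negative solutions vanishing at the origin. Fortunately you never use the exact profile: the flux estimate requires only the \emph{upper} bound $|\nabla\phi(x)|\le C|x|^{q'-1}$, and this follows directly from the uniform bound $\|\tilde\phi_\lambda\|_{C^{1,\tau}(B_2)}\le C$ (Harnack plus \cite{L}) by taking $\lambda=|x|$, without ever passing to the limit. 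Two further points: the claim ``$\sup_{B_r}w$ is controlled by a fixed power of $G(2r)$'' deserves one line of justification (if $w(x_0)=M$ with $x_0\in B_r$, Lipschitz gives $w\ge M/2$ on $B_{M/(2L)}(x_0)\subset B_{2r}$, so $G(2r)\gtrsim M^{N+1}$); and your observation $\nabla w(0)=0$, presented as the sole use of $C^1$-regularity, is in fact never invoked. With the radial-profile step removed, your argument appears to use neither convexity/symmetry nor $w\in C^1$, only $w\in C^{0,1}$ --- which, if the details hold up, would answer the paper's concluding Question.
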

\begin{proof}
Since $\Omega$ is convex and symmetric, \cite[Theorem 3]{GM} ensures that $\phi_q$ has a unique critical point (hence, a global and strict maximizer) $x_0\in\Omega$ and $\phi_q\in C^{1,\tau}(\overline{\Omega})\cap C^2(\overline{\Omega}\setminus\{x_0\})$. Without loss of generality, we assume that $x_0=0$. By contradiction, suppose that \eqref{LI} does not hold true. Hence $\phi:=\phi_p^w=\phi_q\in C^{1,\tau}(\overline{\Omega})\cap C^2(\overline{\Omega}\setminus\{0\})$ satisfies
\begin{equation*}
-\Delta_p^w \phi = \lambda_p^w w \phi^{p-1} \quad \mbox{in} \;\; \Omega, \quad -\Delta_q \phi = \lambda_q \phi^{q-1} \quad \mbox{in} \;\; \Omega, \quad \phi=0 \quad \mbox{on} \;\; \partial\Omega.
\end{equation*}
Set $\sigma:=w |\nabla \phi|^{p-q}\in C^0(\overline{\Omega})\cap C^1(\overline{\Omega}\setminus \{0\})$. Then
\begin{equation*}
\begin{aligned}
\lambda_p^w w \phi^{p-1} &= -\Delta_p^w \phi = -\Div(\sigma |\nabla \phi|^{q-2}\nabla \phi) = -|\nabla \phi|^{q-2}\nabla \phi \cdot \nabla \sigma -\sigma \Delta_q \phi \\
&= -|\nabla \phi|^{q-2}\nabla \phi \cdot \nabla \sigma + \lambda_q \phi^{q-1}\sigma
\end{aligned}
\end{equation*}
everywhere in $\Omega\setminus \{0\}$. Dividing by $|\nabla \phi|^q$, we obtain the first-order PDE
\begin{equation}
\label{normalized}
\frac{\nabla \phi}{|\nabla \phi|^2} \cdot \nabla \sigma + \frac{1}{|\nabla \phi|}\left[\lambda_p^w \left(\frac{\phi}{|\nabla \phi|}\right)^{p-1} - \lambda_q \left(\frac{\phi}{|\nabla \phi|}\right)^{q-1}\right] \sigma = 0 \quad \mbox{everywhere in}\;\; \Omega\setminus \{0\}.
\end{equation}
We set $E:=\frac{\nabla \phi}{|\nabla \phi|^2}$ and $g:=\frac{1}{|\nabla \phi|}\left[\lambda_p^w \left(\frac{\phi}{|\nabla \phi|}\right)^{p-1} - \lambda_q \left(\frac{\phi}{|\nabla \phi|}\right)^{q-1}\right]$, so that \eqref{normalized} becomes
\begin{equation}
\label{pde}
E(x)\cdot \nabla \sigma + g(x)\sigma = 0 \quad \mbox{for all}\;\; x\in\Omega\setminus \{0\}.
\end{equation}
According to \cite[Theorem 1]{GM}, the following asymptotic estimates hold true:
\begin{equation}
\label{asympest}
\phi(0)-\phi(x) \sim |x|^{q'} \quad \mbox{and} \quad |\nabla \phi(x)| \sim |x|^{\frac{1}{q-1}}, \quad \mbox{when} \;\; |x|\to 0.
\end{equation}
As a consequence, recalling that $\phi(0)>0$ and $p>q$,
\begin{equation*}
g(x) \sim |x|^{-\frac{p}{q-1}}-|x|^{-q'} \sim |x|^{-\frac{p}{q-1}} \quad \mbox{as} \;\; |x|\to 0.
\end{equation*}
Thus, there exist $k_1,k_2,\eps>0$ such that
\begin{equation}
\label{asympests}
\begin{alignedat}{2}
&k_1|x|^{q'} \leq \phi(0)-\phi(x) \leq k_2|x|^{q'} \quad &&\mbox{for all} \;\; x\in B_\eps, \\
&k_1|x|^{\frac{1}{q-1}} \leq |\nabla \phi(x)| \leq k_2|x|^{\frac{1}{q-1}} \quad &&\mbox{for all} \;\; x\in B_\eps, \\
&k_1|x|^{-\frac{p}{q-1}}\leq g(x)\leq k_2|x|^{-\frac{p}{q-1}} \quad &&\mbox{for all} \;\; x\in B_\eps.
\end{alignedat}
\end{equation}
Choose any $l\in(0,\phi(0))$ such that $\phi^{-1}([l,\phi(0)])\subseteq B_\eps$ (which is possible since $\phi\in C^0(\overline{\Omega})$ and $0$ is the strict global maximizer of $\phi$). From \eqref{asympests} we infer $B_\rho\subseteq \phi^{-1}([l,\phi(0)])$, where $\rho:=\left(\frac{\phi(0)-l}{k_2}\right)^{\frac{1}{q'}}$. Now pick any $\xi\in\phi^{-1}(l)$ and consider the Cauchy problem
\begin{equation}
\label{characteristics}
\left\{
\begin{alignedat}{2}
&\dot x = E(x), \\
&x(0) = \xi.
\end{alignedat}
\right.
\end{equation}
Since $\xi\neq 0$ and $E\in C^{0,1}_\loc(\Omega\setminus\{0\})$, \eqref{characteristics} possesses a unique global solution $x=x(t)$, defined in its maximal interval $(0,T)$, being $T\in(0,+\infty]$. Observe that $V:\Omega\to\R$ defined as $V(x):=\phi(0)-\phi(x)$ is a Lyapunov function for \eqref{characteristics}: indeed, it is readily seen that $V\geq 0$ in $\Omega$, $V(x)=0$ if and only if $x=0$, and $V$ is decreasing along the flow generated by \eqref{characteristics}, since
\begin{equation}
\label{monotoneflow}
\dot V = \nabla V \cdot \dot x = -\nabla \phi \cdot \frac{\nabla \phi}{|\nabla \phi|^2} = -1 < 0 \quad \mbox{in} \;\; (0,T).
\end{equation}
Thus, $x=0$ is an asymptotically stable equilibrium, that is,
\begin{equation}
\label{asympstable}
\lim_{t\to T^-} x(t) = 0.
\end{equation}
Due to \eqref{monotoneflow} and the choice of $l$, $x(t)\in B_\eps$ for all $t\in(0,T)$. Moreover, $T=\phi(0)-l$: indeed, let $t\to T^-$ and use \eqref{asympstable} in
$$ \phi(x(t))-l = \phi(x(t))-\phi(x(0)) = \int_0^t \nabla \phi(x(\tau))\cdot \dot{x}(\tau) \dtau = \int_0^t \dtau = t \quad \mbox{for all}\;\; t\in(0,T). $$
Set $\theta:=\sigma \circ x$. Then $\dot \theta = \nabla \sigma \cdot \dot x = E(x)\cdot \nabla \sigma$ in $(0,T)$, so \eqref{pde} can be written, along the characteristic curve $x=x(t)$,
\begin{equation}
\label{ode}
\dot \theta + g(x)\theta = 0 \quad \mbox{in} \;\; (0,T).
\end{equation}
Therefore, the non-trivial solution to \eqref{ode} can be explicitly written as
\begin{equation}
\label{odesol}
\theta(t) = \theta(0)\exp\left(-\int_0^t g(x(\tau)) \dtau\right) = \sigma(\xi)\exp\left(-\int_0^t g(x(\tau)) \dtau\right) \quad \mbox{for all} \;\; t\in [0,T).
\end{equation}
We perform the change of variable $z=|x(\tau)|$ in \eqref{odesol}, whence $\dz = \frac{x(\tau)}{|x(\tau)|}\cdot \dot x(\tau) \dtau = \frac{x(\tau)}{|x(\tau)|}\cdot \frac{\nabla \phi(x(\tau))}{|\nabla \phi(x(\tau))|^2} \dtau$. Thus, for any $t\in(0,T)$, the continuity of $g$ jointly with \eqref{asympests} yields
\begin{equation*}
\begin{aligned}
\int_0^t g(x(\tau)) \dtau &\geq k_1\int_0^t |x(\tau)|^{-\frac{p}{q-1}} \dtau \geq k_1^2\int_0^t \frac{|x(\tau)|^{-\frac{p-1}{q-1}}}{|\nabla\phi(x(\tau))|} \dtau \\
&\geq k_1^2\int_0^t |x(\tau)|^{-\frac{p-1}{q-1}} \left[-\frac{x(\tau)}{|x(\tau)|}\cdot \frac{\nabla \phi(x(\tau))}{|\nabla \phi(x(\tau))|^2}\right] \dtau \\
&= -k_1^2\int_{|\xi|}^{|x(t)|} z^{-\frac{p-1}{q-1}} \dz = k_1^2\,\frac{q-1}{p-q}\left[|x(t)|^{-\frac{p-q}{q-1}}-|\xi|^{-\frac{p-q}{q-1}}\right].
\end{aligned}
\end{equation*}
Since, varying with $\xi$, the characteristics \eqref{characteristics} neither intersect nor auto-intersect, one can consider $t=t(x):\phi^{-1}([l,\phi(0)))\to[0,T)$ defined as the inverse function of the characteristic \eqref{characteristics}. Then, setting $M:=k_1^2\,\frac{q-1}{p-q}$ and recalling \eqref{asympests}, we get $|\xi|\geq\rho$ and
\begin{equation}
\label{sigmaasymp}
\begin{aligned}
\sigma(x) &= \theta(t(x)) = \sigma(\xi)\exp\left(-\int_0^{t(x)} g(x(\tau)) \dtau\right) \leq \sigma(\xi)\exp\left(M|\xi|^{-\frac{p-q}{q-1}}t(x)\right)\exp\left(-M|x|^{-\frac{p-q}{q-1}}\right) \\
&\leq \left(\max_{\overline{\Omega}} \sigma\right) \exp\left(MT\rho^{-\frac{p-q}{q-1}}\right)\exp\left(-M|x|^{-\frac{p-q}{q-1}}\right) \quad \mbox{for all}\;\; x\in B_\rho\setminus\{0\}.
\end{aligned}
\end{equation}
Now we show that $w\notin A_p$. Exploiting \eqref{asympests} and \eqref{sigmaasymp} we have
\begin{equation*}
w(x) = \sigma(x) |\nabla u(x)|^{q-p} \leq c |x|^{-\frac{p-q}{q-1}}\exp\left(-M|x|^{-\frac{p-q}{q-1}}\right) \quad \mbox{for all} \;\; x\in B_\rho\setminus\{0\},
\end{equation*}
being $c>0$ a suitable constant. Hence, integrating in polar coordinates and using the change of variable $t=\frac{M}{p-1}\,s^{-\frac{p-q}{q-1}}$, for every $r\in(0,\rho)$ we get
\begin{equation*}
\begin{aligned}
\fint_{B_r} w^{-\frac{1}{p-1}} \dx &\geq cr^{-N} \int_{B_r} |x|^{\frac{p-q}{(p-1)(q-1)}}\exp\left(\frac{M}{p-1}\,|x|^{-\frac{p-q}{q-1}}\right) \dx \\
&= cr^{-N} \int_0^r s^{N-1+\frac{p-q}{(p-1)(q-1)}}\exp\left(\frac{M}{p-1}\,s^{-\frac{p-q}{q-1}}\right) \ds \\
&= cr^{-N} \int_{\frac{M}{p-1}\,r^{-\frac{p-q}{q-1}}}^{+\infty} t^{-\left(N\frac{q-1}{p-q}+p'\right)}e^t \dt = +\infty,
\end{aligned}
\end{equation*}
being $c>0$ a small constant changing at each passage.
\end{proof}

\noindent
{\bf Question.} Do exist a bounded domain $\Omega$ of class $C^2$ and a weight $w\in C^{0,1}(\overline{\Omega}) \cap A_p$ such that \eqref{LI} is not met?

\section*{Acknowledgments}
The first author acknowledges the Science and Engineering Research Board, Government of India, for National Postdoctoral Fellowship, file no. PDF/2023/000038. The second and the third author are member of the {\em Gruppo Nazionale per l'Analisi Ma\-te\-ma\-ti\-ca, la Probabilit\`a e le loro Applicazioni} (GNAMPA) of the {\em Istituto Nazionale di Alta Matematica} (INdAM); they are partially supported by the INdAM-GNAMPA Project 2024 ``Regolarità ed esistenza per operatori anisotropi'' (E5324001950001). \\
This study was partly funded by: Research project of MIUR (Italian Ministry of Education, University and Research) PRIN 2022 ``Nonlinear differential problems with applications to real phenomena'' (Grant Number: 2022ZXZTN2).

\end{document}